\newtheorem{thm}{Theorem}[section]
\newtheorem{lem}[thm]{Lemma}
\theoremstyle{definition}
\theoremstyle{remark}
\newtheorem{remark}[thm]{Remark}
\numberwithin{equation}{section}
\DeclareMathOperator{\Ints}{\mathbb{Z}}  
\DeclareMathOperator{\Real}{\mathbb{R}}  
\DeclareMathOperator{\Torus}{\mathbb{T}}  
\DeclareMathOperator{\e}{e}                  
\DeclareMathOperator{\dtau}{d\it{\tau}}  
\DeclareMathOperator{\dw}{d\it{w}}                 
\DeclareMathOperator{\dx}{d\it{x}}                 
\DeclareMathOperator{\dy}{d\it{y}}                 
\DeclareMathOperator{\la}{\lambda}
\DeclareMathOperator{\ddt}{\frac{d}{d\textit{t}}} 
\DeclareMathOperator{\Tr}{\mathrm{Tr}} 
\DeclareMathOperator{\FTinv}{\mathcal{F}^{-1}} 
\begin{document}

\title[Lower Bounds for Sobolev Norm Blow-up]{Lower Bounds of Potential Blow-Up Solutions of the Three-dimensional Navier-Stokes Equations in $\dot{H}^\frac{3}{2}$}
\author{Alexey Cheskidov and Karen Zaya}
\address{Department of Mathematics, Statistics, and Mathematical Computer Science. University of Illinois at Chicago. 851 South Morgan Street (M/C 249). Chicago IL, 60607}
\email{acheskid@uic.edu, kzaya2@uic.edu}
\date{\today}

\begin{abstract}
We improve previous known lower bounds for Sobolev norms of potential blow-up solutions to the three-dimensional Navier-Stokes equations in $\dot{H}^{3/2}$.
\end{abstract}

\maketitle


\section{Introduction}

We consider the three-dimensional incompressible Navier-Stokes equations 
\begin{align} 
    \label{eq:NSE}
    &\frac{\partial u}{\partial t} + (u \cdot \nabla)  u = -\nabla p  + \nu \Delta u,\notag \\
    &\nabla \cdot u = 0, \\
    &u(x,0) = u_0 (x) \notag ,
\end{align}
where the velocity $u(x,t)$ and the pressure $p(x,t)$ are unknowns, $\nu > 0$ is the kinematic viscosity coefficient, the initial data $u_0 (x) \in L^2(\Omega)$, and the spatial domain $\Omega$ may have periodic boundary conditions or $\Omega = \Real^3$. The question of the regularity of solutions to \eqref{eq:NSE} remains open and is one of the Clay Mathematics Institute Millennium Prize problems. 

	In 1934, Leray \cite{Leray}  published his formative work on the the fluid equations. He proved the existence of global weak solutions to \eqref{eq:NSE} and proved that smooth solutions are unique in the class of Leray-Hopf solutions. He also showed that if $\|u(t)\|_{H^1}$ is continuous on $[0,T^*)$ and blows up at time $T^*$, then
\begin{align}
    \label{eq:LerayBound}
    \|u(t)\|_{\dot{H}^1(\Real^3)} \ge \frac{c }{(T^* - t)^\frac{1}{4}}.
\end{align}
	
	Moreover, the bound for $L^p$ norms for $3< p < \infty$, 
\begin{align}
    \label{LpBlowUpBound}
    \| u(t) \|_{L^p(\Real^3)} \ge \frac{c_p}{(T^*-t)^\frac{p-3}{2p}},
\end{align}
have been known for a long time (see \cite{Leray} and \cite{Giga}). The Sobolev embedding $\dot{H}^s(\Real^3) \subset L^\frac{6}{3-2s}(\Real^3)$ and \eqref{LpBlowUpBound} yield that	
\begin{align}
    \label{eq:GigaBound}
    \|u(t)\|_{\dot{H}^s(\Omega)} \ge \frac{c}{(T^* - t)^\frac{2s-1}{4}},
\end{align}
for $\frac{1}{2} < s < \frac{3}{2}$ and $\Omega = \Real^3$. Robinson, Sadowski, and Silva extended \eqref{eq:GigaBound} in \cite{RobinsonSadSilva} for $\frac{3}{2} < s < \frac{5}{2}$ for the whole space and in the presence of periodic boundary conditions. This bound is considered optimal for those values of $s$. 
	
	When $s > \frac{5}{2}$, Benameur  \cite{Benameur} showed
\begin{align*}
    \| u(t) \|_{\dot{H}^s(\Real^3)} \ge \frac{c(s) \|u(t)\|_{L^2(\Real^3)}^\frac{3-2s}{3} }{(T^* - t)^\frac{s}{3}},
\end{align*}
which was improved upon by Robinson, Sadowski, and Silva in \cite{RobinsonSadSilva} to
\begin{align}
    \label{eq:RobSadSilBound52}
    \| u(t) \|_{\dot{H}^s(\Omega)} \ge \frac{c(s) \|u_0\|_{L^2(\Omega)}^\frac{5-2s}{5} }{(T^* - t)^\frac{2s}{5}},
\end{align}
when $\Omega = \Torus^3$ or $\Omega = \Real^3$. 

The border cases $s = \frac{3}{2}$ and $s = \frac{5}{2}$ required separate treatment. For $s = \frac{3}{2}$, Robinson, Sadowski, and Silva had an epsilon correction for the case with periodic boundary conditions. In \cite{CortissozMonPin}, Cortissoz, Montero, and Pinilla improved the bound for $s = \frac{3}{2}$ on $\Torus^3$, but they had a logarithmic correction:
\begin{align}
    \label{eq:CMP1}
    \| u(t) \|_{\dot{H}^\frac{3}{2}(\Torus^3)} \ge \frac{c}{ \sqrt{(T^*-t) |\log(T^* - t)|}  }.
\end{align}

For $s = \frac{5}{2}$, Cortissoz, Montero, and Pinilla \cite{CortissozMonPin} also found 
\begin{align}
    \label{eq:CMP2}
    \| u(t) \|_{\dot{H}^\frac{5}{2}(\Omega)} \ge \frac{c}{(T^*-t)  | \log(T^* - t)| },
\end{align}
when $\Omega = \Torus^3$ or $\Omega = \Real^3$. In \cite{McCormickEtAl}, the authors proved
\begin{align}
     \limsup_{t \rightarrow T^{*-}} (T^*-t) \|u(t)\|_{\dot{H}^{5/2}(\Omega)} \ge c. 
\end{align}

	In this paper, we improve the bound for the $\dot{H}^\frac{3}{2}(\Omega)$-norm to the optimal bound \eqref{eq:GigaBound} when $\Omega = \Real^3$ or $\Omega = \Torus^3$. Our method is not contingent on rescaling arguments and thus works simultaneously for $\Real^3$ and $\Torus^3$. We stress the importance of the $H^{3/2}$ norm, which scales to the $L^\infty$ norm and corresponds to the uncovered limit of \eqref{eq:GigaBound}.  We also note the significance of $H^{5/2}$, which is a critical space for the Euler equations and scales like $B^1_{\infty, \infty}$, the Beale-Kato-Majda space. Furthermore, the persistence of the logarithmic correction in estimate \eqref{eq:CMP2} is consistent with the recent result of Bourgain and Li \cite{BourgainLi} on the ill-posedness of the Euler equations in $H^{5/2}$.
	
\begin{remark}
    The lower bound for the $\dot{H}^\frac{3}{2}$-norm of blow-up solutions was also presented in papers by Montero \cite{Montero} and McCormick, Olson, Robinson, Rodrigo, Vidal-Lopez, and Zhou \cite{McCormickEtAl}, which both appeared shortly after this paper. 
\end{remark}
	
	Our methods differ from previous works as we utilize Littlewood-Paley decomposition of solutions $u$ of \eqref{eq:NSE} for much of the paper. We denote wave numbers as $\la_q = 2^q$ (in some wave units). For $\psi \in C^\infty(\Omega)$, we define 
\begin{displaymath}
   \psi(\xi) = \left\{
     \begin{array}{lr}
       1 ~~:& | \xi | \le \frac{1}{2} \\
       0 ~~:& | \xi | > 1.
     \end{array}
   \right.
\end{displaymath} 
Next define $\phi(\xi) = \psi(\xi / \la_1) - \psi(\xi)$ and $\phi_q(\xi) = \phi(\xi / \la_q)$. Then 
\begin{align}
    \label{eq:LPDecomp}
    u = \sum_{q = -\infty}^\infty u_q,
\end{align}
in the sense of distributions, where the $u_q$ is the $q^{th}$ Littlewood-Paley piece of $u$. On $\Real^3$, the Littlewood-Paley pieces are defined as 
\begin{align}
    u_q (x) =  \int_{\Real^3} u(x-y) \mathcal{F}^{-1}(\phi_q)(y) \dy,
\end{align}
where $\mathcal{F}$ is the Fourier transform. In the periodic case, the Littlewood-Paley pieces are given by
\begin{align}
    u_q (x) =  \sum_{k \in \Ints^3} \hat{u}(k) \phi_q(k) \e^{i k \cdot x},
\end{align}
where \eqref{eq:LPDecomp} holds provided $u$ has zero-mean. Moreover, $u_q = 0$ in the periodic case when $q<0$. We will use the notation
\begin{align*}
    u_{\le Q} = \sum_{q \le Q} u_q,  ~ u_{\ge Q} = \sum_{q \ge Q} u_q.
\end{align*}
We define the homogeneous Sobolev norm of $u$ as
\begin{align}
    \label{eq:SobNormDef}
    \| u\|_{\dot{H}^s} = \Big( \sum_{q = -\infty}^\infty \la_q^{2s} \|u_q\|_{L^2}^2  \Big)^\frac{1}{2}.
\end{align}
Note that it corresponds to the nonhomogeneous Sobolev norm $H^s$ in the periodic case.

	We suppress $L^p$ norm notation as  $\| \cdot \|_p :=  \| \cdot \|_{L^p} $. We will also suppress the notation for domains for integrals and functional spaces, i.e. $\int := \int_\Omega$. All $L^p$ and Sobolev spaces are over $\Omega,$ where $\Omega$ either has periodic boundary conditions or is the whole space $\Real^3$, as described in the introduction (unless explicitly otherwise stated). The methods of proof apply to either domain. Sobolev spaces are denoted by $H^s$ and homogeneous Sobolev spaces by $\dot{H}^s$.  We will use the symbol $\lesssim$ (or $\gtrsim$) to denote that an inequality that holds up to an absolute constant. 

\section{Bounding Blow-Up for $ s = \frac{3}{2}$}

We begin by testing the weak formulation of the Navier-Stokes equation with $\la_q^{2s}  (u_q)_q$ to obtain
\begin{align}
    \label{eq:L2Tested}
    \ddt \big( \la_q^{2s} \|u_q\|^2_2 \big) = -\nu \la_q^{2s+2} \|u_q\|^2_2  + 2 \la_q^{2s}  \int \Tr [( u \otimes u)_q \cdot \nabla u_q] ~\dx. 
\end{align}
 In the typical fashion, we write 
\begin{align}
     \label{eq:Tensor}
     (u \otimes u)_q = u_q \otimes u + u \otimes u_q + r_q(u,u),
\end{align}
for $q > -1$, where the remainder function is given by 
\begin{align}
     \label{eq:Remainder}
     r_q(u,u)(x) = \int \FTinv(\phi_q)(y) (u(x-y)-u(x)) \otimes (u(x-y)-u(x)) ~\dy.
\end{align}
Thus, we rewrite the nonlinear term as 
\begin{align}
    \label{eq:NLTRemainer}
     \int \Tr [( u \otimes u)_q \cdot \nabla u_q] \dx 
     = \int r_q(u,u)  \cdot \nabla u_q \dx -  \int u_q \cdot \nabla u_{\le q+1} \cdot u_q \dx.
\end{align}

\begin{lem}
\label{NLTLemma}
The integral \eqref{eq:NLTRemainer} corresponding to the nonlinear term in \eqref{eq:L2Tested} is bounded above by
\begin{align}
\begin{aligned}
    \label{eq:TraceBound}
     \int \Tr [( u \otimes u)_q \cdot \nabla u_q] ~\dx 
     \lesssim  & \la_q^{-1} \|u_q\|_2 \sum_{p=-\infty}^{q} \la_p^2 \|u_p\|_4^2 \\
      & +  \la_q \|u_q\|_2 \sum_{p=q+1}^\infty \|u_p\|_4^2 \\
      & + \|u_q\|_2^2 \sum_{p=-\infty}^{q+1} \la_p^\frac{5}{2} \|u_p\|_2 .
\end{aligned}
\end{align}
\end{lem}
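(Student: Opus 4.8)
The plan is to estimate the two terms on the right-hand side of \eqref{eq:NLTRemainer} separately: the local term $-\int u_q\cdot\nabla u_{\le q+1}\cdot u_q\,\dx$ will produce the third sum, while the remainder term $\int r_q(u,u)\cdot\nabla u_q\,\dx$ will produce the first two sums. Throughout I write $K_q=\FTinv(\phi_q)$ and $\delta_y u(x):=u(x-y)-u(x)$.

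For the local term I would apply H\"older's inequality in the form $L^2\times L^\infty\times L^2$, placing the two copies of $u_q$ in $L^2$ and $\nabla u_{\le q+1}$ in $L^\infty$, so that $\big|\int u_q\cdot\nabla u_{\le q+1}\cdot u_q\,\dx\big|\le\|u_q\|_2^2\,\|\nabla u_{\le q+1}\|_\infty$. Expanding $\nabla u_{\le q+1}=\sum_{p\le q+1}\nabla u_p$ and using the Bernstein inequalities $\|\nabla u_p\|_\infty\lesssim\la_p\|u_p\|_\infty$ and $\|u_p\|_\infty\lesssim\la_p^{3/2}\|u_p\|_2$ (valid in three dimensions for frequency-localized functions on both $\Real^3$ and $\Torus^3$) gives $\|\nabla u_{\le q+1}\|_\infty\lesssim\sum_{p\le q+1}\la_p^{5/2}\|u_p\|_2$, which is exactly the third sum.

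For the remainder term I would first use H\"older and Bernstein to pull out a factor of $\nabla u_q$, writing $\big|\int r_q(u,u)\cdot\nabla u_q\,\dx\big|\le\|r_q(u,u)\|_2\|\nabla u_q\|_2\lesssim\la_q\|u_q\|_2\|r_q(u,u)\|_2$. By Minkowski's integral inequality applied to \eqref{eq:Remainder}, together with the pointwise identity $\|\delta_y u\otimes\delta_y u\|_2\le\|\delta_y u\|_4^2$, I obtain $\|r_q(u,u)\|_2\le\int|K_q(y)|\,\|\delta_y u\|_4^2\,\dy$. I would then split $u=u_{\le q}+u_{>q}$. The crucial observation is that each increment $\delta_y u_p$ remains frequency-localized at $\la_p$ (in Fourier it equals $(\e^{-i\xi\cdot y}-1)\hat u_p$), so the Littlewood--Paley square function inequality yields $\|\delta_y u_{\le q}\|_4^2\lesssim\sum_{p\le q}\|\delta_y u_p\|_4^2$ and $\|\delta_y u_{>q}\|_4^2\lesssim\sum_{p>q}\|\delta_y u_p\|_4^2$. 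For the low piece I would use the increment bound $\|\delta_y u_p\|_4\le|y|\,\|\nabla u_p\|_4\lesssim|y|\la_p\|u_p\|_4$ (from the fundamental theorem of calculus), and for the high piece the trivial bound $\|\delta_y u_p\|_4\le2\|u_p\|_4$. Using the scaling $K_q(y)=\la_q^3 K(\la_q y)$ with $K=\FTinv(\phi)$ Schwartz, so that $\int|K_q(y)|\,|y|^2\,\dy\sim\la_q^{-2}$ and $\int|K_q(y)|\,\dy\sim1$, I arrive at $\|r_q(u,u)\|_2\lesssim\la_q^{-2}\sum_{p\le q}\la_p^2\|u_p\|_4^2+\sum_{p>q}\|u_p\|_4^2$; multiplying by $\la_q\|u_q\|_2$ gives the first two sums.

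The main obstacle is the passage from the fourth-power norm of the full increment to a genuine sum of squares. A naive triangle inequality would only produce the squared single sum $\big(\sum_{p\le q}\la_p\|u_p\|_4\big)^2$, which is strictly larger than the claimed $\sum_{p\le q}\la_p^2\|u_p\|_4^2$; the improvement rests on the almost-orthogonality encoded in the square function estimate, applied to the frequency-localized family $\{\delta_y u_p\}$. The remaining work is bookkeeping: checking that the square function inequality holds uniformly in $y$ on both domains, and tracking the kernel moments $\int|K_q(y)|\,|y|^k\,\dy\sim\la_q^{-k}$.
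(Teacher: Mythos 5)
Your proposal is correct and follows essentially the same route as the paper: H\"older plus Bernstein on the local term, and for the remainder the $L^2$ bound on $r_q(u,u)$ via the kernel, the low/high frequency split with the mean value theorem on low modes and the triangle inequality on high modes. The only difference is one of explicitness: you justify the passage from $\|\delta_y u\|_4^2$ to $\sum_p\|\delta_y u_p\|_4^2$ via the Littlewood--Paley square function (and note that $\delta_y u_p$ stays frequency-localized), a step the paper's proof uses implicitly without comment.
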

\begin{proof} We examine the two integrals on the right-hand side of  \eqref{eq:NLTRemainer} separately.  By H\"older's inequality, 
\begin{align*}
     \int r_q(u,u)  \cdot \nabla u_q \dx  \lesssim \|r_q(u,u)\|_2 \la_q \|u_q\|_2.
\end{align*}
We use Littlewood-Paley decomposition and split the sum into low versus high modes to find
\begin{align*}
     \|r_q(u,u)\|_2 
      \lesssim &  \int_{\Real^3} | \FTinv(\phi_q)(y) | ~\| u(x-y)-u(x) \|_4^2 \dy \\
      \lesssim & \int_{\Real^3} | \FTinv(\phi_q)(y) | \sum_{p = -\infty}^q \| (u(x-y)-u(x))_p  \|_4^2 \dy \\
     & +  \int_{\Real^3} | \FTinv(\phi_q)(y) | \sum_{p = q+1}^\infty \| (u(x-y)-u(x))_p  \|_4^2 \dy.
\end{align*}
We apply the Mean-Value Theorem on the low modes and the triangle inequality on the high modes to arrive at
\begin{align*}
    \|r_q(u,u)\|_2 
    \lesssim &  \int_{\Real^3} | \FTinv(\phi_q)(y) | ~ |y|^2 \sum_{p=-\infty}^q \| \nabla u_p \|_4^2  \dy \\
    & + \int_{\Real^3} | \FTinv(\phi_q)(y) | \sum_{p=q+1}^\infty \|u_p\|_4 \dy \\
    \lesssim &   \la_q^{-2} \sum_{p=-\infty}^q \la_p^2 \| u_p \|_4^2 
    + \sum_{p=q+1}^\infty \|u_p\|_4
\end{align*}
Thus, 
\begin{align}
\begin{aligned}
    \label{eq:GeneralRemainderNorm} 
     \int r_q(u,u)  \cdot \nabla u_q \dx 
     \lesssim & \la_q^{-1} \| u_q \|_2 \sum_{p=-\infty}^q \la_p^2 \| u_p \|_4^2 \\
     & + \la_q \|u_q \|_2 \sum_{p=q+1}^\infty \|u_p\|_4^2.
\end{aligned}
\end{align}
For the second term of \eqref{eq:NLTRemainer}, we use a similar process as above in addition to Bernstein's inequality to find
\begin{align}
    \label{eq:ThirdNLTEstimate}
    \int u_q \cdot \nabla u_{\le q+1} \cdot u_q \dx 
    & \lesssim \|u_q\|_2^2 \sum_{p=-\infty}^{q+1} \la_p \|u_p\|_\infty \\
    & \lesssim \|u_q\|_2^2 \sum_{p=-\infty}^{q+1} \la_p^{5/2} \|u_p\|_2 \notag.
\end{align}
Combining \eqref{eq:GeneralRemainderNorm} and \eqref{eq:ThirdNLTEstimate} yields the desired bound \eqref{eq:TraceBound}. 
\end{proof}

Similar estimates were executed in \cite{CCFSOnsager} and \cite{CheskidovShvydkoyRegularity}. We apply the bound obtained in Lemma \ref{NLTLemma} to write
\begin{align}
    \label{eq:Flux1}
    \ddt \sum_{q = -\infty}^\infty \big( \la_q^{2s} \|u_q\|_2^2 \big)
     \lesssim -\sum_{q = -\infty}^\infty \big( \nu \la_q^{2s + 2}  \|u_q\|_2^2 \big) + 2\big(A + B + C\big),
\end{align}
where
\begin{align}
    \label{eq:COVNLPiecesA}
    &A = \sum_{q = -\infty}^\infty \sum_{p=-\infty}^q \la_q^{2s-1} \|u_q\|_2 \la_p^2 \|u_p\|_4^2 , \\
    \label{eq:COVNLPiecesB}
    &B = \sum_{q = -\infty}^\infty  \sum_{p = q+1}^\infty  \la_q^{2s+1} \|u_q\|_2 \|u_p\|_4^2, \\
    \label{eq:COVNLPiecesC}
    &C = \sum_{q = -\infty}^\infty \sum_{p=-\infty}^{q+1} \la_q^{2s} \|u_q\|_2^2   \la_p^{5/2} \|u_p\|_2. 
 \end{align}

\begin{thm} 
    \label{thm:Ricatti1}
Let $u$ be a solution to \eqref{eq:NSE} with finite energy initial data. Then for $s = \frac{3}{2}$, the solution $u$ satisfies the Riccati-type differential inequality
\begin{align}
    \label{eq:Ricatti32}
    \ddt \sum_{q = -\infty}^\infty \Big( \la_q^{3} \|u_q\|_2^2 \Big)
     \lesssim \sum_{q = -\infty}^\infty \Big( \la_q^{3}  \|u_q\|_2^2 \Big)^2
\end{align}
\end{thm}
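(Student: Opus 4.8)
The plan is to start from the differential inequality \eqref{eq:Flux1} and show that, for $s=\frac32$, the three nonlinear sums obey
\begin{align*}
A+B+C\lesssim \|u\|_{\dot{H}^{5/2}}\,\|u\|_{\dot{H}^{3/2}}^2,
\end{align*}
after which Young's inequality converts this into the Riccati form by spending the dissipation. Throughout I abbreviate $X=\|u\|_{\dot{H}^{3/2}}^2=\sum_q\la_q^3\|u_q\|_2^2$ and $D=\|u\|_{\dot{H}^{5/2}}^2=\sum_q\la_q^5\|u_q\|_2^2$; for $s=\frac32$ the dissipative term in \eqref{eq:Flux1} is precisely $-\nu D$, which carries the favorable sign. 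It is convenient to write $a_q=\la_q^{3/2}\|u_q\|_2$ and $b_q=\la_q^{5/2}\|u_q\|_2=\la_q a_q$, so that $X=\sum_q a_q^2$ and $D=\sum_q b_q^2$. The target estimate is then $A+B+C\lesssim D^{1/2}X$.

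For $A$ and $B$ I would first invoke Bernstein's inequality in the form $\|u_p\|_4^2\lesssim\la_p^{3/2}\|u_p\|_2^2$ to remove the $L^4$ norms. Substituting the definitions of $a_q$ and $b_q$, both $A$ and $B$ assume the shape $\sum_q b_q\,(\kappa\ast a^2)_q$, where $\kappa$ is a one-sided, geometrically decaying kernel (with ratios $2^{-1/2}$ for $A$ and $2^{-3/2}$ for $B$) and $a^2=(a_q^2)_q$. Since such a kernel lies in $\ell^1$, Young's inequality for sequences gives $\|\kappa\ast a^2\|_{\ell^2}\lesssim\|a^2\|_{\ell^2}$, so Cauchy--Schwarz together with the elementary bound $\|a^2\|_{\ell^2}=\big(\sum_q a_q^4\big)^{1/2}\le\sum_q a_q^2=X$ yields $A,\,B\lesssim\|b\|_{\ell^2}X=D^{1/2}X$.

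The term $C$ is the real obstacle and I expect it to demand the most care. Writing $\la_q^3\|u_q\|_2^2=a_q^2$ and $\la_p^{5/2}\|u_p\|_2=\la_p a_p$, we have $C=\sum_q a_q^2\sum_{p\le q+1}\la_p a_p$. Here the inner sum pairs $b_p$ against the characteristic function of $\{p\le q+1\}$, which is not summable; the convolution argument used for $A$ and $B$ fails precisely because of this low-frequency tail. Instead I would apply Cauchy--Schwarz to the inner sum with the weight $\la_p$,
\begin{align*}
\sum_{p\le q+1}\la_p a_p\le\Big(\sum_{p\le q+1}\la_p\Big)^{1/2}\Big(\sum_{p\le q+1}\la_p a_p^2\Big)^{1/2}\lesssim\la_q^{1/2}\|u\|_{\dot{H}^2},
\end{align*}
using $\sum_{p\le q+1}\la_p\lesssim\la_q$ and $\sum_p\la_p a_p^2=\|u\|_{\dot{H}^2}^2$. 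Feeding this back gives $C\lesssim\|u\|_{\dot{H}^2}\sum_q\la_q^{1/2}a_q^2=\|u\|_{\dot{H}^2}\,\|u\|_{\dot{H}^{7/4}}^2$, and the convexity (H\"older) interpolations $\|u\|_{\dot{H}^2}\le\|u\|_{\dot{H}^{3/2}}^{1/2}\|u\|_{\dot{H}^{5/2}}^{1/2}$ and $\|u\|_{\dot{H}^{7/4}}\le\|u\|_{\dot{H}^{3/2}}^{3/4}\|u\|_{\dot{H}^{5/2}}^{1/4}$ combine to give exactly $\|u\|_{\dot{H}^{3/2}}^{2}\|u\|_{\dot{H}^{5/2}}=XD^{1/2}$.

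Finally I would return to \eqref{eq:Flux1}, which now reads $\ddt X\lesssim-\nu D+C_0 D^{1/2}X$ for some constant $C_0$. Young's inequality $C_0 D^{1/2}X\le\nu D+\tfrac{C_0^2}{4\nu}X^2$ absorbs the dissipative term, leaving $\ddt X\lesssim X^2$ (with implied constant depending on $\nu$), which is \eqref{eq:Ricatti32}. The single point requiring vigilance is the passage in $C$: the naive Cauchy--Schwarz pairing $\sum_p b_p\big(\sum_{q\ge p-1}a_q^2\big)$ is doomed because $\sum_{q\ge p-1}a_q^2\to X$ as $p\to-\infty$, so the geometric weight $\la_p$ must be retained and routed through the interpolation inequalities above rather than discarded.
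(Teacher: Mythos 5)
Your argument is correct, and it reaches the Riccati inequality by a genuinely different route through the double sums. The paper, after the same Bernstein reductions, applies Young's inequality \emph{termwise inside each double sum} with exponents chosen so that every one of $A$, $B$, $C$ splits directly into $\sum_q \big(\nu^{-1}\lambda_q^3\|u_q\|_2^2\big)^2$ plus a multiple of $\nu\sum_q\lambda_q^5\|u_q\|_2^2$ that is absorbed by the dissipation; the geometric factors $\lambda_{q-p}^{-1/2}$, $\lambda_{p-q}^{-5/2}$, $\lambda_{q-p}^{-\delta}$ make the double sums collapse. You instead prove the aggregate bound $A+B+C\lesssim \|u\|_{\dot H^{5/2}}\|u\|_{\dot H^{3/2}}^2$ --- via discrete Young's convolution inequality for $A$ and $B$, and a weighted Cauchy--Schwarz plus Sobolev interpolation for $C$ --- and only then spend the dissipation once. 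Your treatment of $C$ is the genuinely novel piece and it is sound: the observation that the low-frequency tail $\sum_{p\le q+1}b_p$ is not controlled by a summable kernel is exactly the difficulty the paper circumvents with its $\lambda_{q-p}^{-\delta}$ trick, and your detour through $\|u\|_{\dot H^2}\|u\|_{\dot H^{7/4}}^2$ and the two interpolation inequalities lands on the same $XD^{1/2}$ bound. The one caveat is that your conclusion $\frac{d}{dt}X\lesssim X^2=\big(\sum_q\lambda_q^3\|u_q\|_2^2\big)^2$ is formally weaker than the stated right-hand side $\sum_q\big(\lambda_q^3\|u_q\|_2^2\big)^2$ (an $\ell^1$--$\ell^2$ gap in the sequence $\lambda_q^3\|u_q\|_2^2$), which the paper's frequency-localized argument preserves; since Theorem \ref{thm:BlowUpRate32} only uses $\frac{d}{dt}y\lesssim y^2$, this costs nothing for the application, but you should state that you are proving the (sufficient) weaker form rather than the literal display \eqref{eq:Ricatti32}.
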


\begin{proof}
We bound the nonlinear terms.  First, we estimate \eqref{eq:COVNLPiecesA} for $s = \frac{3}{2}$. We apply Bernstein's inequality in three-dimensions and we rewrite the sum
\begin{align*}
    A &= \sum_{q = -\infty}^\infty \sum_{p=-\infty}^q \la_q^2 \|u_q\|_2 \la_p^2 \|u_p\|_4^2\\
    & \lesssim \sum_{q = -\infty}^\infty \sum_{p=-\infty}^q \la_q^2 \|u_q\|_2 \la_p^{7/2} \|u_p\|_2^2 \\
    & = \sum_{q = -\infty}^\infty \sum_{p=-\infty}^q \la_{q-p}^{-1/2} \big(\la_q^{5/2} \|u_q\|_2 \big) \big( \la_p^3 \|u_p\|^2_2 \big).
\end{align*}
We apply the Cauchy-Schwartz inequality to yield
\begin{align}
    \label{eq:YoungA}
    A \lesssim \sum_{q = -\infty}^\infty \sum_{p=-\infty}^q  \la_{q-p}^{-1/2} \Big(\frac{\nu}{3} \la_q^5 \|u_q\|_2^2\Big) 
    +  \la_{q-p}^{-1/2} \Big(\nu^{-1} \la_p^3 \|u_p\|_2^2 \Big)^2.
\end{align}
Next we sum in $p$ for the first term and exchange the order of summation and sum in $q$ for the second term of \eqref{eq:YoungA}:
\begin{align}
    \label{eq:EstimateA}
    A \lesssim \sum_{q = -\infty}^\infty  \Big( \nu^{-1} \la_q^3 \|u_q\|_2^2 \Big)^2  + \frac{\nu}{3} \sum_{q = -\infty}^\infty \Big( \la_q^5 \|u_q\|_2^2 \Big).
\end{align}

To estimate \eqref{eq:COVNLPiecesB} when $s = \frac{3}{2}$, first we apply Bernstein's inequality for three-dimensions to find\begin{align*}
    B & = \sum_{q = -\infty}^\infty  \sum_{p = q+1}^\infty  \la_q^4 \|u_q\|_2 \|u_p\|_4^2 \\
    & \lesssim \sum_{q = -\infty}^\infty  \sum_{p = q+1}^\infty  \la_q^4 \|u_q\|_2 \la_p^{3/2} \|u_p\|_2^2. 
\end{align*}
We rewrite the sum to look like 
\begin{align*}
     B \lesssim \sum_{q = -\infty}^\infty  \sum_{p = q+1}^\infty \la_{p-q}^{-5/2} \big( \la_q^{3/2} \|u_q\|_2 \big) \big( \la_p^{3/2} \|u_p\|_2 \big) \big( \la_p^{5/2} \|u_p\|_2 \big).
\end{align*}
We apply Young's inequality with the exponents $\theta_1 = \theta_2 = 4$ and $\theta_3 = 2$ to yield
\begin{align}
\begin{aligned}
    \label{eq:YoungB}
    B \lesssim & \sum_{q = -\infty}^\infty  \sum_{p = q+1}^\infty  \la_{p-q}^{-5/2} \Big( \nu^{-1} \la_q^3 \|u_q\|_2^2 \Big)^2   \\
    & +  \sum_{q = -\infty}^\infty  \sum_{p = q+1}^\infty  \la_{p-q}^{-5/2} \Big( \nu^{-1} \la_p^{3} \|u_p\|_2^2 \Big)^2 \\
    & + \sum_{q = -\infty}^\infty  \sum_{p = q+1}^\infty \la_{p-q}^{-5/2}  \Big( \frac{\nu}{3} \la_p^{5} \|u_p\|_2^2 \Big).
\end{aligned}
\end{align}
Next we sum in $p$ for the first term and exchange the order of summation and sum in $q$ for the second and third terms of \eqref{eq:YoungB}. Note the summation in $q$ converges:
\begin{align*}
    B \lesssim \sum_{q=-\infty}^{\infty} \Big(\nu^{-1} \la_q^3 \|u_q\|_2^2 \Big)^2
    + \sum_{p=-\infty}^\infty \Big[ \Big(\nu^{-1} \la_p^3 \|u_p\|_2^2 \Big)^2
    + \Big( \frac{\nu}{3} \la_p^5 \|u_p\|_2^2 \Big) \Big]. 
\end{align*}
Thus we arrive at the bound 
\begin{align}
    \label{eq:EstimateB}
    B \lesssim \sum_{q = -\infty}^\infty  \Big( \nu^{-1} \la_q^3 \|u_q\|_2^2 \Big)^2  + \frac{\nu}{3} \sum_{q = -\infty}^\infty \Big( \la_q^5 \|u_q\|_2^2 \Big).
\end{align}

	Finally, we estimate \eqref{eq:COVNLPiecesC} for $s = \frac{3}{2}$. We rewrite the sum 
\begin{align*}
    C &= \sum_{q = -\infty}^\infty \sum_{p=-\infty}^{q+1}  \la_q^3 \|u_q\|_2^2 \la_p^{5/2} \|u_p\|_2^2 \\
    &= \sum_{q = -\infty}^\infty \sum_{p=-\infty}^{q+1} \la_{q-p}^{-\delta} \big(\la_q^{3/2} \|u_q\|_2 \big)^{2-\delta} \big( \la_q^{5/2} \|u_q\|_2^2 \big)^\delta \big( \la_p^{3/2} \|u_p\|_2 \big)^\delta \big( \la_p^{5/2} \|u_p\|_2 \big)^{1-\delta},
\end{align*}
where $\delta$ is a small positive number we can choose. We apply Young's inequality with
\begin{align*}
    \theta_1 = \frac{4}{2-\delta}, ~~~
    \theta_2 = \frac{2}{\delta},~~~
    \theta_3 = \frac{4}{\delta}, ~~~
    \theta_4 = \frac{2}{1-\delta},
\end{align*}
where we require $\delta < 1$ to ensure the exponents are all positive and indeed $\frac{1}{\theta_1} + \frac{1}{\theta_2} + \frac{1}{\theta_3} + \frac{1}{\theta_4}=1$. Then we have 
\begin{align}
\begin{aligned}
    \label{eq:YoungC}
    C \lesssim & \sum_{q = -\infty}^\infty \sum_{p=-\infty}^{q+1} 
    \Big[ \la_{q-p}^{-\delta} \Big( \nu^{-1} \la_q^3 \|u_q\|_2^2 \Big)^2
    + \la_{q-p}^{-\delta} \Big( \frac{\nu}{6} \la_q^5 \|u_q\|_2^2 \Big) \Big] \\
    & + \sum_{q = -\infty}^\infty \sum_{p=-\infty}^{q+1}
    \Big[ \la_{q-p}^{-\delta}  \Big(\nu^{-1} \la_p^3 \|u_p\|_2^2 \Big)^2 
    + \la_{q-p}^{-\delta} \Big( \frac{\nu}{6} \la_p^5 \|u_p\|_2^2 \Big) \Big],
\end{aligned}
\end{align}
For the first two terms of \eqref{eq:YoungC}, we sum in $p$. For the third and fourth terms, we exchange the order of summation and sum in $q$ to arrive at
\begin{multline*}
    C \lesssim \sum_{q = -\infty}^\infty 
    \Big[ \Big(\nu^{-1} \la_q^3 \|u_q\|_2^2 \Big)^2
    + \Big(\frac{\nu}{6} \la_q^5 \|u_q\|_2^2 \Big) \Big] \\
    + \sum_{p=-\infty}^{\infty}
    \Big[ \Big( \nu^{-1} \la_p^3 \|u_p\|_2^2 \Big)^2
    + \Big( \frac{\nu}{6} \la_p^5 \|u_p\|_2^2 \Big) \Big].
\end{multline*}
Note $\delta$ positive ensures the summation in $q$ converges. Rewriting the above inequality yields 
\begin{align}
    \label{eq:EstimateC}
    C \lesssim \sum_{q = -\infty}^\infty  \Big( \nu^{-1} \la_q^3 \|u_q\|_2^2 \Big)^2  + \frac{\nu}{3} \sum_{q = -\infty}^\infty \Big( \la_q^5 \|u_q\|_2^2 \Big).
\end{align}

	We use the estimates \eqref{eq:EstimateA}, \eqref{eq:EstimateB}, and \eqref{eq:EstimateC} in \eqref{eq:Flux1} with $s= \frac{3}{2}$ to get the Ricatti-type differential inequality
\begin{align}
    \label{Ricatti1}
     \ddt \sum_{q = -\infty}^\infty \Big( \la_q^3 \|u_q\|_2^2 \Big)
     \lesssim \sum_{q = -\infty}^\infty \Big( \nu^{-1} \la_q^3  \|u_q\|_2^2 \Big)^2.
\end{align} 
\end{proof}

\begin{remark}
The method used to prove Theorem \ref{thm:Ricatti1} works for $\frac{1}{2} < s < \frac{5}{2}$. Instead of \eqref{eq:Ricatti32}, one must show 
\begin{align}
    \label{eq:Ricattis}
    \ddt \sum_{q = -\infty}^\infty \Big( \la_q^{2s} \|u_q\|_2^2 \Big)
     \lesssim \sum_{q = -\infty}^\infty \Big( \la_q^{2s}  \|u_q\|_2^2 \Big)^\frac{2s+1}{2s-1}.
\end{align}
In the proof for \eqref{eq:Ricattis}, one must treat the three cases $\frac{1}{2} < s < \frac{3}{2}$, $s = \frac{3}{2}$, and  $\frac{3}{2} < s < \frac{5}{2}$ separately, but in analogous manners.  
\end{remark}

\begin{thm}
\label{thm:BlowUpRate32}
Let $u$ be a smooth solution to \eqref{eq:NSE} with finite energy initial data such that $u$ loses regularity at time $T^*$. Then 
\begin{align}
    \label{eq:H32BlowUpRate}
    \|u (t) \|_{\dot{H}^{3/2} (\Omega)} \ge \frac{c}{\sqrt{T^*-t}} ,
\end{align}
for $0\le t < T^*$ and $\Omega = \Torus^3$ or $\Omega = \Real^3$.
\end{thm}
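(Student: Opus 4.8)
The plan is to derive the blow-up rate \eqref{eq:H32BlowUpRate} directly from the Riccati-type differential inequality \eqref{eq:Ricatti32} established in Theorem \ref{thm:Ricatti1}. Introduce the scalar quantity
\begin{align*}
    y(t) = \sum_{q = -\infty}^\infty \la_q^3 \|u_q(t)\|_2^2 = \|u(t)\|_{\dot{H}^{3/2}}^2,
\end{align*}
where the last equality is just the definition \eqref{eq:SobNormDef} of the homogeneous Sobolev norm at $s = \tfrac{3}{2}$. Then \eqref{eq:Ricatti32} reads $y' \lesssim y^2$, i.e. $y'(t) \le K y(t)^2$ for some absolute constant $K > 0$ (absorbing the viscosity-dependent factor from \eqref{Ricatti1} into the constant). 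This is the autonomous ODE comparison that governs the entire argument, and the rest is a standard integration.

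First I would argue that if $u$ is smooth on $[0,T^*)$ and loses regularity precisely at $T^*$, then necessarily $y(t) \to \infty$ as $t \to T^{*-}$. This uses the fact that $\dot{H}^{3/2}$ (equivalently control of $y$) is a regularity-controlling quantity: so long as $y$ stays bounded on an interval, the solution can be continued smoothly past its right endpoint, contradicting that $T^*$ is the first singular time. Hence $\lim_{t\to T^{*-}} y(t) = +\infty$, and in particular $y(t) > 0$ on an interval $(t_0, T^*)$. Next I would fix any $t \in (t_0, T^*)$ with $y(t) > 0$ and integrate the differential inequality. Dividing $y' \le K y^2$ by $y^2$ gives $-\frac{d}{dt}\big(y(t)^{-1}\big) \le K$, so integrating from $t$ to any $\tau \in (t, T^*)$ yields
\begin{align*}
    \frac{1}{y(t)} - \frac{1}{y(\tau)} \le K(\tau - t).
\end{align*}
Letting $\tau \to T^{*-}$ and using $y(\tau) \to \infty$ so that $1/y(\tau) \to 0$, I obtain $\frac{1}{y(t)} \le K(T^* - t)$, hence $y(t) \ge \frac{1}{K(T^*-t)}$. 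Taking square roots and recalling $y(t) = \|u(t)\|_{\dot{H}^{3/2}}^2$ gives $\|u(t)\|_{\dot{H}^{3/2}} \ge \frac{c}{\sqrt{T^*-t}}$ with $c = K^{-1/2}$, which is exactly \eqref{eq:H32BlowUpRate}.

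The genuinely nontrivial input is the Riccati inequality itself, which is already supplied by Theorem \ref{thm:Ricatti1}; the remaining steps are elementary ODE manipulation. The one point requiring a little care, and which I expect to be the main obstacle, is the justification that $y(t) \to \infty$ at the singular time rather than merely failing to be continuable for some softer reason — that is, linking the abstract statement ``loses regularity at $T^*$'' to the blow-up of this specific quantity. I would handle this via a standard local existence/continuation criterion in $\dot{H}^{3/2}$ (the critical-type space here, which controls $\|u\|_\infty$ by the Sobolev scaling remarked upon in the introduction), so that boundedness of $y$ up to $T^*$ would extend the solution and contradict maximality of the regularity interval. Everything else is the clean integration of $y' \lesssim y^2$ performed above.
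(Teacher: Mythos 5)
Your proposal is correct and follows essentially the same route as the paper: set $y(t) = \|u(t)\|_{\dot{H}^{3/2}}^2$, invoke the Riccati inequality $y' \lesssim y^2$ from Theorem \ref{thm:Ricatti1}, and integrate from $t$ to $T^*$ using $y(\tau)\to\infty$ at the singular time. Your extra care in justifying that $y$ must blow up at $T^*$ (via a continuation criterion) is a point the paper leaves implicit, but the argument is the same.
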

\begin{proof}
Let $y (t) = \| u (t) \|^2_{\dot{H}^{3/2}}$. By Theorem \ref{thm:Ricatti1}, $y$ satisfies the differential inequality 
\begin{align}
    \label{eq:RiccattiInY}
    \ddt y(t) \lesssim  y(t)^2.
\end{align}
Rearranging the inequality and integrating from time $t$ to blow-up time $T^*$ yields
\begin{align*}
    \int_{y(t)}^\infty \frac{\dw}{w^2} \lesssim \int_t^{T^*} \dtau,
\end{align*}
which becomes 
\begin{align*}
    \frac{1}{y(t)} \lesssim T^* -t.
\end{align*}
Then, as desired
\begin{align}
    \|u(t)\|_{\dot{H}^{3/2}(\Omega)} \ge \frac{c}{\sqrt{T^* - t}},
\end{align} 
for $0\le t < T^*$ and $\Omega = \Torus^3$ or $\Omega = \Real^3$.
\end{proof}

\begin{remark}
    The procedure in Theorem \ref{thm:BlowUpRate32} can be applied to  \eqref{eq:Ricattis} for $y(t) = \| u(t) \|^2_{\dot{H}^{s}(\Omega)}$ to yield 
\begin{align}
    \|u(t)\|_{\dot{H}^s(\Omega)} \ge \frac{c}{(T^* - t)^\frac{2s-1}{4}},
\end{align}
for $\frac{1}{2} < s < \frac{5}{2}$, $0\le t < T^*$, and $\Omega = \Real^3$ or $\Omega = \Torus^3$.
\end{remark}


\textbf{Acknowledgments:}  The work of A. Cheskidov was partially supported by NSF Grant DMS-1108864. The work of K. Zaya was partially supported by NSF grant DMS-1210896. Part of the work was carried out while K. Zaya was visiting and received some support from the Institute for Pure \& Applied Mathematics, an NSF Math Institute at UCLA, during the Mathematics of Turbulence long program.

\bibliographystyle{plain}
\bibliography{SobolevRevised}

\begin{thebibliography}{10}

\bibitem{Benameur}
Jamel Benameur.
\newblock On the blow-up criterion of 3{D} {N}avier-{S}tokes equations.
\newblock {\em Journal of Mathematical Analysis and Applications},
  371(2):719--727, 2010.

\bibitem{BourgainLi}
Jean Bourgain and Dong Li.
\newblock Strong ill-posedness of the incompressible {E}uler equation in
  borderline {S}obolev spaces.
\newblock {\em Invent. Math.}, 201(1):97--157, 2015.

\bibitem{CCFSOnsager}
Alexey Cheskidov, Peter Constantin, Susan Friedlander, and Roman Shvydkoy.
\newblock Energy conservation and {O}nsager's conjecture for the {E}uler
  equations.
\newblock {\em Nonlinearity}, 21(6):1233--1252, 2008.

\bibitem{CheskidovShvydkoyRegularity}
Alexey Cheskidov and Roman Shyvdkoy.
\newblock The regularity of weak solutions of the 3{D} {N}avier-{S}tokes in
  ${B}^{-1}_{\infty, \infty}$.
\newblock {\em The Archive for Rational Mechanics and Analysis},
  195(1):159--169, 2010.

\bibitem{CortissozMonPin}
Jean~C. Cortissoz, Julio~A. Montero, and Carlos~E. Pinilla.
\newblock On lower bounds for possible blow-up solutions to the periodic
  {N}avier-{S}tokes equation.
\newblock {\em Journal of Mathematical Physics}, 55, 2014.

\bibitem{Giga}
Yoshikazu Giga.
\newblock Solutions for semilinear parabolic equations in ${L}^p$ and
  regularity of weak solutions of the navier-stokes system.
\newblock {\em Journal of Differential Equations}, 62(2):186--212, April 1986.

\bibitem{Leray}
Jean Leray.
\newblock Sur le mouvement d'un liquide visqueux emplissant l'espace.
\newblock {\em Acta mathematica}, 63:193--248, 1934.

\bibitem{McCormickEtAl}
David~S. McCormick, Eric~J. Olson, James~C. Robinson, Jose~L. Rodrigo,
  Alejandro Vidal-Lopez, and Yi~Zhou.
\newblock Lower bounds on blowing-up solutions of the 3{D} {N}avier-{S}tokes
  equations in $\dot{H}^{3/2}$, $\dot{H}^{5/2}$, and $\dot{B}^{5/2}_{2,1}$.
\newblock {\em arXiv:1503.04323}, 2015.

\bibitem{Montero}
Julio~A. Montero.
\newblock Lower bounds for possible blow--up solutions for the
  {N}avier-{S}tokes equations revisited.
\newblock {\em arXiv:1503.03063}, 2015.

\bibitem{RobinsonSadSilva}
James~C. Robinson, Witold Sadowski, and Ricardo~P. Silva.
\newblock Lower bounds on blow up solutions of the three-dimensional
  {N}avier-{S}tokes equations in homogeneous {S}obolev spaces.
\newblock {\em Journal of Mathematical Physics}, 53(11), 2012.

\end{thebibliography}

\end{document}